
\documentclass[12pt, reqno]{amsart}
\usepackage{amsmath, amsthm, amscd, amsfonts, latexsym, amssymb, graphicx, color}
\usepackage[bookmarksnumbered, colorlinks, plainpages]{hyperref}

\makeatletter \oddsidemargin.9375in \evensidemargin \oddsidemargin
\marginparwidth1.9375in \makeatother

\newtheorem{thm}{Theorem}[section]
\newtheorem{prop}[thm]{Proposition}
\newtheorem{cor}[thm]{Corollary}
\theoremstyle{definition}
\newtheorem{defn}[thm]{Definition}

\theoremstyle{remark}
\newtheorem{rem}[thm]{Remark}
\numberwithin{equation}{section}

\begin{document}

\setcounter{page}{1}

\title[S-small and S-essential submodules]{S-small and S-essential submodules}
\author[Saeed Rajaee]{Saeed Rajaee $^{*}$ }
\thanks{{\scriptsize
\hskip -0.4 true cm MSC(2010): Primary:13A15, 13C99, 16D10, 16D80.
\newline Keywords: S-prime submodule;  S-small submodule; S-co-m module.\\
$*$Corresponding author }}
\begin{abstract}
This paper is concerned with S-co-m modules which are a generalization of co-m modules.
In section 2, we introduce the S-small and S-essential submodules of a unitary $R$-module $M$ over a commutative ring $R$ with $1\neq 0$ such that S is a multiplicatively closed subset of $R$.  We prove that if  $M$
is an S-co-m module satisfying the S-DAC and $N\leq M$, then $N\leq ^{S}_{e}M$ if and only
if there exists $I\ll ^{S}R$ such that $s(0:_{M}I)\leq N\leq (0 :_{M}I)$ for some $s\in S$. Let $M$ be a faithful S-strong co-m $R$-module. We prove that if $N\ll ^{S}M$ then there exists an ideal $I\leq ^{S}_{e}R$ such that $s(0 :_{M}I)\leq N\leq (0 :_{M}I)$. The converse is true if $S=\{1\}$and $M$ is a prime module. In section 3, we introduce the S-quasi-copure submodules $N$ of an $R$-module $M$ and investigate some results related to this class of submodules.

\end{abstract}

\maketitle
\section{Introduction}
Throughout this article, $R$ is a commutative ring with $1\neq 0$ and $M$ is a nonzero unital $R$-module. We denote the set of all submodules of $M$ by $L(M)$, and also $L ^{*}(M)=L(M)\setminus \{0, M\}$. A nonempty subset $S$ of $R$ is called a multiplicatively closed subset (briefly, m.c.s.) of $R$ if $0\notin S$,  $1\in S$, and  $ss'\in S$ for all $s, s'\in S$.
Note that $S_{P} = R-P$ is a m.c.s. of $R$ for every
$P\in Spec(R)$. Recently, Sevim et al.
(2019) introduced the notion of $S$-prime submodule which is a generalization of prime submodule and used them to characterize certain class of rings/modules such as prime submodules, simple modules, torsion
free modules, $S$-Noetherian modules and etc. 
In \cite{AA}, Anderson et al. defined the concept
of $S$-multiplication modules
and $S$-cyclic modules which are $S$-versions of multiplication and cyclic
modules and extended many results on multiplication and cyclic modules to
$S$-multiplication and $S$-cyclic modules. An $R$-module $M$ is said to be an $S$-multiplication module if for each submodule $N$ of $M$ there exist $s\in S$ and an ideal $I$ of $R$ such that $sN\subseteq IM\subseteq N$. 
It is easy to see that an $R$-module $M$ is $S$-multiplication if and only if for
each submodule $N$ of $M$, there exists an $s\in S$ such that $sN\subseteq (N : M)M\subseteq N$. If we take $S=\{1_{R}\}$, this definition coincides with the multiplication module definition.

According to \cite[Example 1]{AA}, if $Ann(M)\cap S\neq \emptyset $, then $M$ is an $S$-multiplication module. This implies that if $0\in S$,
then $M$ is trivially $S$-multiplication module. Clearly, every
multiplication module is an $S$-multiplication module and the converse is true if $S\subseteq U(R)$, where $U(R)$ is the set of units in $R$, see \cite[Example 2]{AA}.
Also, $M$ is called an $S$-cyclic $R$-module if there exist $s\in S$ and $m\in M$ with $sM\subseteq Rm\subseteq M$. Every $S$-cyclic module is an $S$-multiplication module, see, \cite[Proposition
5]{AA}. For a prime ideal $P$ of $R$, $M$ is called $P$-cyclic if $M$ is $(R-P)$-cyclic.\\
According to \cite[Proposition 8]{AA}, $M$ is $\mathfrak{m}$-cyclic for each $\mathfrak{m}\in Max(R)$ if and only if $M$ is a
finitely generated multiplication module. We recall that a m.c.s. $S$ of $R$ is said to satisfy maximal multiple condition if there exists $s\in S$ such that $t$ divides $s$ for each $t\in S$.
  
In \cite{AD}, Anderson and Dumitrescu defined the concept of $S$-Noetherian rings
which is a generalization of Noetherian rings and they extended many properties
of Noetherian rings to $S$-Noetherian rings. A submodule $N$ of
$M$ is said to be an $S$-finite submodule if there exists a finitely generated submodule
$K$ of $M$ such that $sN\subseteq K\subseteq N$. Also, $M$ is said to be an $S$-Noetherian module if
its each submodule is $S$-finite. In particular, $R$ is said to be an $S$-Neotherian ring
if it is an $S$-Noetherian $R$-module, i.e., for every ideal $I$ of $R$ there exists a finitely generated ideal
$J$ of $R$ such that $sI\subseteq J\subseteq I$.
  
In \cite{Ed}, Eda Yiliz et al. introduced and
studied $S$-comultiplication modules which are the dual notion of $S$-multiplication
modules. They characterize certain class of rings/modules such as comultiplication
modules, $S$-second submodules, $S$-prime ideals, $S$-cyclic modules in
terms of $S$-comultiplication modules. Let $M$ be an $R$-module and $S \subseteq R$ be a m.c.s of $R$. $M$ is called $S$-comultiplication ($S$-co-m for short), if for each submodule $N$ of $M$, there exist
$s\in S$ and an ideal $I$ of $R$ such that $s(0 :_{M} I)\subseteq N\subseteq (0 :_{M}I)$. In particular, a
ring $R$ is called $S$-co-m ring if it is an $S$-co-m $R$-module. Every $R$-module $M$ with $Ann(M)\cap S = \emptyset $ is trivially an $S$-co-m module. Every co-m module is also an $S$-co-m module. Also the converse is true provided that $S\subseteq U(R)$, see \cite[Example 3]{Ed}.
\section{S-small and S-essential submodules}
In this section we generalize the concepts of small submodule and essential submodule of an $R$-module $M$ to the S-small submodule and S-essential submodule of $M$ such that $S\subseteq R$ is a m.c.s. We provide some useful theorems concerning this new class of submodules. 
\begin{defn}
Let $S$ be a m.c.s. of $R$ and let $M$ be an $R$-module with $N\leq M$.
\begin{itemize}
\item[(i)] $N$ is called an S-small (S-supfluous) submodule of $M$, denoted by $N\ll ^{S}M$, if for every submodule $L$ of $M$ and $s\in S$, $sM\leq N + L$ implies that there exists an $t\in S$ such that $tM\leq L$.
\item[(ii)] $N$ is called an S-essential (S-large) submodule of $M$, denoted by $N\leq^{S}_{e}M$ if for every submodule $L$ of $M$ if $N\cap  L=0$ implies that there exists an $s\in S$ such that $sL=0$.
\item[(iii)] The S-socle of $M$, denoted by $Soc^{S}(M)$ which is the intersection of all S-essential submodules of $M$.
\item[(iv)] The S-radical of $M$, denoted by $Rad^{S}(M)$ which is the sum of all S-small submodules of $M$.
\end{itemize}
If we take $S =\{1_{R}\}$, this definitions coincide with the small and essential submodule definitions.
\end{defn}
\begin{thm}
Let $M$ be an $R$-module with submodules $K\leq N\leq M$ and $S\subseteq R$ be a m.c.s. 
Then the following assertions hold.
\begin{itemize}
\item[(i)] If $K\leq ^{S}_{e}M$ then $K\leq ^{S}_{e}N$ and $N\leq ^{S}_{e}M$.
\item[(ii)] If $K\leq ^{S}_{e}N$ and $M$ is a faithful prime $R$-module then $K\leq ^{S}_{e}M$.
\item[(iii)] Assume that $H\leq M$. If $H\cap K\leq^{S}_{e}M$ then $H\leq^{S}_{e}M$ and $K\leq^{S}_{e}M$.
\item[(iv)] $N\ll ^{S}M$ if and only if $K\ll ^{S}M$ and $N/K\ll ^{S}M/K$.
\end{itemize}
\end{thm}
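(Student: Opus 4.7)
My plan is to verify each of the four clauses by unpacking the definitions and tracing submodule inclusions; I expect parts (i), (iii), and (iv) to go through by direct formal manipulation, while part (ii) will require a more delicate use of the faithful prime hypothesis.

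For (i), unpack $K\leq^{S}_{e}M$: every $L\leq M$ with $K\cap L=0$ admits some $s\in S$ with $sL=0$. To obtain $K\leq^{S}_{e}N$, restrict the test submodule $L$ to lie inside $N$ and apply the hypothesis verbatim. To obtain $N\leq^{S}_{e}M$, note that $N\cap L=0$ forces $K\cap L\leq N\cap L=0$ since $K\leq N$, so the same $s$ works. Part (iii) is analogous: if $H\cap L=0$, then $(H\cap K)\cap L\leq H\cap L=0$, and $H\cap K\leq^{S}_{e}M$ supplies the required $s\in S$; symmetrically when $K\cap L=0$.

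For (iv) the forward direction uses $K\leq N$: given $sM\leq K+L$, the inclusion $K+L\leq N+L$ gives $sM\leq N+L$, so $N\ll^{S}M$ yields $tM\leq L$ and hence $K\ll^{S}M$. For $N/K\ll^{S}M/K$, any containment $s(M/K)\leq N/K+L'/K$ with $K\leq L'\leq M$ lifts to $sM\leq N+L'$, and a second application of $N\ll^{S}M$ produces $tM\leq L'$, which descends to $t(M/K)\leq L'/K$. The backward direction chains the two hypotheses: starting from $sM\leq N+L$, project to $M/K$ to get $s(M/K)\leq N/K+(L+K)/K$, apply $N/K\ll^{S}M/K$ to extract $s'\in S$ with $s'(M/K)\leq(L+K)/K$, i.e., $s'M\leq L+K$, and then apply $K\ll^{S}M$ to this latter inclusion to obtain $tM\leq L$.

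The main obstacle is (ii). Given $L\leq M$ with $K\cap L=0$, the natural move is to intersect with $N$: since $K\leq N$, we have $K\cap(L\cap N)=K\cap L=0$, so $K\leq^{S}_{e}N$ supplies $s\in S$ with $s(L\cap N)=0$. Because $M$ is faithful prime, $\Ann(y)=0$ for every nonzero $y\in M$, so $s\neq 0$ forces $L\cap N=0$. The remaining step, promoting $L\cap N=0$ to $sL=0$ for some $s\in S$, is the delicate one; I would appeal to a uniformity-type consequence of the faithful prime hypothesis (namely that any two nonzero submodules of $M$ intersect nontrivially) to conclude $L=0$, whereupon $sL=0$ is automatic with $s=1$. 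Without such a uniformity property built into the notion of ``prime module,'' this last step is where a standard argument would break down, and it is the one I would scrutinize most carefully.
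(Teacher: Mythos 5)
Your handling of (i) and (iii) matches the paper's argument, and your proof of (iv) is correct; in the reverse direction you are in fact more careful than the paper. The paper only tests submodules $L$ with $K\leq L\leq M$ and never invokes the hypothesis $K\ll^{S}M$, so as written it only verifies the smallness condition for overmodules of $K$; your reduction --- pass to $s(M/K)\leq N/K+(L+K)/K$, extract $s'M\leq L+K$ from $N/K\ll^{S}M/K$, and then apply $K\ll^{S}M$ to $s'M\leq K+L$ to reach $tM\leq L$ --- is the complete argument for an arbitrary $L$.

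The genuine issue is (ii), and you have located it exactly. Up to the point where you get $s(N\cap L)=0$ and use faithfulness plus primeness to force $N\cap L=0$, you are doing what the paper does (the paper writes $s\in Ann_{R}(N\cap L)=Ann_{R}(M)=0$, which taken literally would give $0=s\in S$; its real content is that $N\cap L\neq 0$ is impossible). But the step you flag --- promoting $N\cap L=0$ to $sL=0$ for some $s\in S$ --- is not justified in the paper either: it simply asserts ``therefore $sL=0$.'' The uniformity you would need (any two nonzero submodules of a faithful prime module meet nontrivially) is not a consequence of the definition: over a domain $R$, the module $M=R\oplus R$ is faithful and prime (it is torsion-free), yet $R\oplus 0$ and $0\oplus R$ intersect trivially. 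Taking $K=N=R\oplus 0$ and $S=\{1\}$ there, $K\leq^{S}_{e}N$ holds trivially, while $K$ is not $S$-essential in $M$ because $L=0\oplus R$ satisfies $K\cap L=0$ but $sL\neq 0$ for every $s\in S$. So assertion (ii) fails as stated and would need an extra hypothesis (for instance $N\leq^{S}_{e}M$, or $M$ uniform). In short: your proposal is sound for (i), (iii), (iv), and the gap you identified in (ii) is real --- it is a defect of the theorem and of the paper's own proof, not something a cleverer argument on your side would have repaired.
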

\begin{proof}
i) Assume $L\leq N$ and $K\cap L=0$. Since $K\leq ^{S}_{e}M$ there exists an $s\in S$ such that $sL=0$. Clearly, $K\leq ^{S}_{e}N$. Now if $L\leq M$ and $N\cap L=0$, then $K\cap L=K\cap (N\cap L)=0$. Since $K\leq ^{S}_{e}M$ there exists an $s\in S$ such that $sL=0$ and hence $N\leq ^{S}_{e}M$.\\
ii) Suppose that $K\leq ^{S}_{e}N$ and $L\leq M$ such that $K\cap L=0$. Then $K\cap (N\cap L)=0$ since $K\leq ^{S}_{e}N$ there exists an $s\in S$ such that $s(N\cap L)=0$. This implies that $s\in Ann_{R}(N\cap L)=Ann_{R}(M)=0$ and therefore $sL=0$.\\
iii) The proof is straightforward by (i).\\
iv) $(\Rightarrow)$ Suppose that $sM\leq K+L$ for some $L\leq M$ and $s\in S$. This implies that $sM\leq N+L$ since $N\ll ^{S}M$ hence there exists an $t\in S$ such that $tM\leq L$ this conclude that $K\ll ^{S}M$. Now let $s(M/K)\leq N/K+L/K$ for some $s\in S$ and $L/K\leq M/K$. Then $s(M/K)=(sM+K)/K\leq (N+L)/K$ and hence $sM\leq sM+K\leq N+L$. Since $N\ll ^{S}M$ there exists an $t\in S$ such that $tM\leq L$. It conclude that $tM+K\leq L+K=L$ and hence $t(M/K)=(tM+K)/K\leq L/K$. This implies that $N/K\ll ^{S}M/K$.\\
$(\Leftarrow )$ Suppose that $sM\leq N+L$ for some $K\leq L\leq M$ and $s\in S$. Then $sM+K\leq (N+L)+K=N+L$ and hence
\[s(M/K)=(sM+K)/K\leq (N+L)/K=N/K+L/K.\]
Since $N/K\ll ^{S}M/K$  therefore there exists an $t\in S$ such that $t(M/K)\leq L/K$. Then $tM\leq tM+K\leq L$ and hence $N\ll ^{S}M$.
\end{proof}
\begin{defn}
Let $S\subseteq R$ be a m.c.s. and let $M$ be an $R$-module.
\begin{itemize}
\item[(i)]  $M$ satisfies the S-double annihilator condition (S-DAC for short) if for
each ideal $I$ of $R$ there exists an $s\in S$ such that $sAnn_{R}((0 :_{M}I))\subseteq I$, see \cite[Definition 2.14]{Far}.
\item[(ii)] $M$ is an S-strong co-m module if $M$ is an S-co-m
$R$-module which satisfies the S-DAC, , see \cite[Definition 2.15]{Far}.
\item[(iii)] A submodule $N$ of $M$ is called an S-direct summand of $M$ if there exist a
submodule $K$ of $M$ and $s\in S$ such that $sM = N + K$, \cite[Definition 2.8]{Far}.
\item[(iv)] $M$ is said to be an S-semisimple module if every submodule of $M$ is an S-direct
summand of $M$, see \cite[Definition 2.9]{Far}.
\end{itemize}
\end{defn}
\begin{prop}
Let $M$ be a faithful S-strong co-m $R$-module.
\begin{itemize}
\item[(i)] If $N\ll ^{S}M$ then there exist $I\leq ^{S}_{e}R$ and $t\in S$ such that $s(0 :_{M}I)\leq N\leq (0 :_{M}I)$. The converse is true if $S=\{1\}$and $M$ is a prime module.
\item[(ii)] If $M$ is an S-semisimple $R$-module then the assertion (i) satisfies.
\end{itemize}
\end{prop}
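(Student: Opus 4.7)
The plan is to prove part (i) in two directions and adapt the forward argument for part (ii).

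\textbf{Forward direction of (i).} Apply the S-co-m hypothesis to $N$ to produce $s_0\in S$ and an ideal $I$ of $R$ with $s_0(0:_M I)\leq N\leq(0:_M I)$. I will argue this $I$ is S-essential. Given an ideal $J$ of $R$ with $I\cap J=0$, the intermediate target is to find an element of $S$ sending $M$ into $(0:_M I)+(0:_M J)$. Applying S-co-m to the submodule $(0:_M I)+(0:_M J)$ yields $s_3\in S$ and an ideal $K$ with
\[s_3(0:_M K)\leq(0:_M I)+(0:_M J)\leq(0:_M K).\]
The right inclusion forces $K\subseteq\Ann_R((0:_M I))\cap\Ann_R((0:_M J))$. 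Two applications of S-DAC, to $I$ and to $J$ respectively, supply $s_1,s_2\in S$ with $s_1\Ann_R((0:_M I))\subseteq I$ and $s_2\Ann_R((0:_M J))\subseteq J$, so $s_1s_2 K\subseteq I\cap J=0$. This forces $s_1s_2 M\leq(0:_M K)$, and combining with the left inclusion displayed above gives $s_1s_2s_3 M\leq(0:_M I)+(0:_M J)$. Multiplying by $s_0$ and using $s_0(0:_M I)\leq N$ produces $s_0s_1s_2s_3 M\leq N+(0:_M J)$. Then S-smallness of $N$ delivers $t\in S$ with $tM\leq(0:_M J)$, hence $tJM=0$, and faithfulness of $M$ yields $tJ=0$.

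\textbf{Converse of (i).} When $S=\{1\}$ and $M$ is prime, the hypothesis collapses to $N=(0:_M I)$ for an essential ideal $I\leq_e R$. In a faithful prime $R$-module every nonzero submodule has zero annihilator, so $(0:_M I)\neq 0$ would yield a nonzero $m\in(0:_M I)$ with $I\subseteq\Ann_R(m)=0$, contradicting essentiality of $I$ in the nonzero ring $R$. Therefore $N=0$, which is trivially small.

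\textbf{Part (ii).} Under S-semisimplicity, every submodule admits an S-direct-summand decomposition $sM=L+K$. I would substitute these decompositions for the S-co-m step of the forward argument, extracting the ideal $I$ from the annihilator of the complementary summand and carrying the chain of estimates through, so that the biconditional of (i) holds without needing the primeness assumption.

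\textbf{Main obstacle.} The critical step is producing $u\in S$ with $uM\leq(0:_M I)+(0:_M J)$; this is precisely where S-co-m applied to a sum of annihilator submodules must be coordinated with two separate invocations of S-DAC, which is exactly the content of the ``S-strong'' hypothesis. Once this bridge is built, everything else reduces to formal manipulation of the definition of $N\ll^S M$ together with the fact that an ideal killing a faithful module is zero.
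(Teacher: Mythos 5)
Part (i) of your proposal is correct. In the forward direction you follow the same skeleton as the paper: apply the S-co-m property to $N$ to get $s_0(0:_M I)\leq N\leq (0:_M I)$, take an ideal $J$ with $I\cap J=0$, push a multiple of $M$ into $N+(0:_M J)$, and finish with S-smallness of $N$ plus faithfulness. The one real difference is the bridge step: the paper simply invokes \cite[Lemma 2.16 (b)]{Far} to get $st(0:_M I\cap J)\leq (0:_M I)+(0:_M J)$ with $(0:_M I\cap J)=M$, whereas you re-derive exactly this instance from scratch by applying S-co-m to the submodule $(0:_M I)+(0:_M J)$ and S-DAC to $I$ and $J$; your version is self-contained and the chain $s_1s_2K\subseteq I\cap J=0$, $s_1s_2s_3M\leq (0:_M I)+(0:_M J)$ checks out. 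Your converse is genuinely different from the paper's: you observe that in a faithful prime module every nonzero element has zero annihilator, so $(0:_M I)\neq 0$ would force $I=0$, which cannot be essential in the nonzero ring $R$; hence $N=0$ and smallness is trivial. The paper instead argues directly: given $sM\leq N+K$ it applies S-co-m to $K$ to get $t'(0:_M J)\leq K\leq (0:_M J)$, uses the $t=1$ case of the same lemma to obtain $(0:_M I\cap J)=(0:_M I)+(0:_M J)\geq sM$, deduces $I\cap J\subseteq \Ann_R(sM)=\Ann_R(M)=0$ from primeness and faithfulness, and then uses essentiality of $I$ to land a multiple of $M$ inside $K$. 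Your route is shorter and exposes that under these hypotheses the converse is degenerate (the only candidate $N$ is $0$); the paper's longer route does not rely on that collapse.

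Part (ii) is where your proposal has a real gap. You only sketch the idea of replacing the S-co-m step by an S-direct-summand decomposition $sM=N+K$ and ``extracting the ideal $I$ from the annihilator of the complementary summand,'' but this is never carried out, and it is not clear it can be: the decomposition supplies a submodule $K$, not an ideal, there is no reason why $N\leq (0:_M \Ann_R(K))$ or why a multiple of $(0:_M \Ann_R(K))$ should sit inside $N$, and no essentiality of $\Ann_R(K)$ is in sight, so the chain of estimates from (i) does not transfer as stated. In fairness, the paper's own proof of (ii) is equally thin (it also stops at the decomposition $sM=N+K$ and asserts that this ``implies the assertion (i)''), so you are leaving the same hole the paper leaves; but as written your proposal does not prove (ii), and you should either supply the missing construction of the ideal $I$ or note explicitly that (ii) requires an argument beyond the decomposition itself.
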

\begin{proof}
i) Assume that $N\ll^{S}M$ since $M$ is an S-co-m module there exist an
ideal $I$ of $R$ and an $t\in S$ such that $t(0 :_{M}I)\leq N\leq (0 :_{M}I)$. Suppose that $I\cap J = 0$ for some ideal $J$ of $R$. By virtue of \cite[Lemma 2.16 (b)]{Far} there exists an $s\in S$ such that
\begin{align*}
N +(0 :_{M}J)&\geq t(0 :_{M}I) + (0 :_{M}J)\geq t(0 :_{M}I) + t(0 :_{M}J)\\
&\geq st(0 :_{M}I \cap J)= stM.
 \end{align*}
Take $s'=st\in S$. Since $N\ll^{S}M$ hence $s'M\leq N+(0:_{M}J)$ implies that there exists an $s''\in S$ such that $s''M\leq (0:_{M}J)$. This conclude that $s''J\subseteq Ann_{R}(M)=0$ and then $I\leq ^{S}_{e}R$.\\ 
Conversely, let $N\in L(M)$ such that $t(0 :_{M}I)\leq N\leq (0 :_{M}I)$ for some $t\in S$ and $I\leq ^{S}_{e}R$. Assume that $K\leq M$ such that $sM\leq N+K$ for some $s\in S$. We must show that there exists an $x\in S$ such that $xM\leq K$. Since $M$ is an S-co-m module there exist $t'\in S$ and an ideal $J$ of $R$ such that $t'(0:_{M}J)\leq K\leq (0:_{M}J)$.
Set $t=1$ in equality $t(0 :_{M}I\cap J)\leq (0 :_{M}I) + (0 :_{M}J)$. It conclude that $(0:_{M}I\cap J)=(0:_{M}I)+(0:_{M}J)\geq N+K\geq sM$. Therefore $I\cap J\subseteq Ann_{R}(sM)=Ann_{R}(M)=0$ and since $I\leq ^{S}_{e}R$, there exists an $s'\in S$ such that $s'J=0$. It conclude that $s'M\leq (0:_{M}J)$. Take $x=s't'$, then $xM=s't'M\leq t'(0:_{M}J)\leq K$.\\
ii) Since $M$ is an S-semisimple module hence every submodule of $M$ is an S-direct
summand of $M$. Therefore for every submodule $N$ of $M$ there exist a
submodule $K$ of $M$ and $s\in S$ such that $sM = N + K$. This implies the assertion (i).
\end{proof}
\begin{thm}
Let $M$ be a torsion-free S-strong co-m module and let $N\leq M$. Then $N\leq ^{S}_{e} M$ if and only if there exist $I\ll ^{S}R$ and an $s\in S$ such that $s(0 :_{M} I)\leq N\leq (0 :_{M} I)$.
\end{thm}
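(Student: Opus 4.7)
The plan is to mirror Proposition~2.3(i) with the roles of S-small and S-essential interchanged: I use the S-comultiplication hypothesis to replace submodules by intervals of the form $[s(0:_{M}I),(0:_{M}I)]$ indexed by ideals of $R$, use the S-DAC to transfer annihilator information from $M$ back to $R$, and use torsion-freeness to collapse the slack introduced by multiplication by elements of $S$. Throughout I will rely on the identity $(0:_{M}I)\cap(0:_{M}J)=(0:_{M}I+J)$.

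For $(\Rightarrow)$, assume $N\leq^{S}_{e}M$, and use the S-co-m property to fix $s\in S$ and an ideal $I\leq R$ with $s(0:_{M}I)\leq N\leq(0:_{M}I)$; the plan is to show that this $I$ is S-small. To check $I\ll^{S}R$, take any ideal $J\leq R$ and $u\in S$ with $uR\leq I+J$, so that $u\in I+J$. Then $u$ annihilates $(0:_{M}I+J)=(0:_{M}I)\cap(0:_{M}J)$; torsion-freeness of $M$ forces this intersection to be zero, hence $N\cap(0:_{M}J)=0$. The essentiality of $N$ yields some $s'\in S$ with $s'(0:_{M}J)=0$, and the S-DAC applied to $J$ furnishes $s''\in S$ with $s''\Ann_{R}((0:_{M}J))\subseteq J$; then $t:=s''s'\in S$ satisfies $tR\leq J$, as required.

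For $(\Leftarrow)$, suppose $s(0:_{M}I)\leq N\leq(0:_{M}I)$ with $I\ll^{S}R$, and let $L\leq M$ satisfy $N\cap L=0$. I would apply the S-co-m property to $L$ to obtain $t'\in S$ and an ideal $J\leq R$ with $t'(0:_{M}J)\leq L\leq(0:_{M}J)$. For any $x\in(0:_{M}I)\cap(0:_{M}J)$ one has $sx\in N$ and $t'x\in L$, whence $st'x\in N\cap L=0$; thus $st'(0:_{M}I+J)=0$, and torsion-freeness gives $(0:_{M}I+J)=0$, so $\Ann_{R}((0:_{M}I+J))=R$. The S-DAC then produces $s'''\in S$ with $s'''R\leq I+J$, and applying $I\ll^{S}R$ to the ideal $J$ yields $t\in S$ with $tR\leq J$. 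Finally $L\leq(0:_{M}J)$ forces $JL=0$, so $tL=0$, establishing $N\leq^{S}_{e}M$.

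The main obstacle I anticipate is not the algebraic manipulation itself but the bookkeeping of the several distinct elements of $S$ generated by the successive applications of S-co-m, S-DAC, essentiality, and S-smallness, together with verifying that \emph{torsion-free} is being used in the $S$-sense that multiplication by any $s\in S$ acts injectively on $M$: this interpretation is what licenses the implication $s(0:_{M}I+J)=0\Rightarrow(0:_{M}I+J)=0$ used in both directions, and without it the argument collapses.
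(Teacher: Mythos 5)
Your proof is correct and follows essentially the same route as the paper's: both directions use the S-co-m hypothesis to trap $N$ (resp.\ $L$) between $s(0:_{M}I)$ and $(0:_{M}I)$, then combine the identity $(0:_{M}I)\cap(0:_{M}J)=(0:_{M}I+J)$, torsion-freeness, S-DAC and the S-essential/S-small hypotheses exactly as in the paper. The only cosmetic difference is in the converse direction, where the paper applies S-DAC directly to $st\in \Ann_{R}((0:_{M}I+J))$ without needing torsion-freeness, while you first use torsion-freeness to get $\Ann_{R}((0:_{M}I+J))=R$; both variants are valid.
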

\begin{proof}
$(\Rightarrow)$ Suppose that $N\leq ^{S}_{e} M$. Since $M$ is an S-co-m module, there exist an
ideal $I$ of $R$ and an $s\in S$ such that $s(0 :_{M}I)\leq N\leq (0:_{M}I)$. Assume that $tR\leq I+J$ for some ideal $J$ of $R$ and an $t\in S$, then
\begin{align*}
N\cap (0 :_{M}J)&\leq (0 :_{M}I)\cap (0 :_{M}J) = (0 :_{M}I+J)\leq (0 :_{M}tR) = 0.
\end{align*}
Since $N\leq ^{S}_{e}M$ there exists an $t'\in S$ such that $t'(0 :_{M}J) = 0$ and therefore $t'\in Ann_{R}((0:_{M}J))$. Since $M$ satisfies the S-DAC there exists an $t''\in S$ such that
$t't''\in t''Ann_{R}((0 :_{M}J))\subseteq J$. Take $x=t't''\in S$, then $xR\subseteq J$ and the proof is complete.\\
$(\Leftarrow)$ Assume that there exists an ideal $I\ll ^{S}R$ such that $s(0 :_{M} I)\leq N\leq (0 :_{M} I)$ for some $s\in S$. Let $L\leq M$ and $N\cap L=0$. We must show that there exists an $y\in S$ such that $yL=0$. Since $M$ is an S-co-m $R$-module there exist an ideal $J$ of $R$ and an $t\in S$ such that
$t(0 :_{M}J)\leq L\leq (0:_{M}J)$. This implies that 
\begin{align*}
0&=N\cap L\geq s(0:_{M}I)\cap t(0:_{M}J)\geq st((0:_{M}I)\cap (0:_{M}J))\\
&=st(0:_{M}I+J).
\end{align*}
Therefore $st\in Ann_{R}(0:_{M}I+J)$. Since $M$ satisfies S-DAC hence there exists an $t'\in S$ such that 
$t'Ann_{R}(0:_{M}I+J)\subseteq I+J$. Take $x=stt'\in S$. This conclude that $x\in I+J$ and then $xR\leq I+J$. Since  $I\ll ^{S}R$ then there exists an $y\in S$ such that $yR\subseteq J$. This implies that $y\in J$ and hence $yL\leq y(0:_{M}J)=0$.
\end{proof}
\begin{cor}
Let $M$ be a torsion-free S-strong co-m $R$-module and let $N\leq M$. Then $Soc^{S}(M)\leq (0:_{M}Rad^{S}(R))$.
\end{cor}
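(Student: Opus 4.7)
The plan is to read this as an essentially formal corollary of the preceding theorem, together with the standard identity that the annihilator of a sum of ideals is the intersection of annihilators. The two hypotheses of the theorem (torsion-free, S-strong co-m) are inherited verbatim, so they are available.

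First I would show that for every ideal $I\ll^{S}R$, the submodule $(0:_{M}I)$ is S-essential in $M$. Indeed, setting $N=(0:_{M}I)$ and $s=1_{R}\in S$, the chain $s(0:_{M}I)\leq N\leq (0:_{M}I)$ holds trivially, so the backwards direction of the previous theorem yields $(0:_{M}I)\leq^{S}_{e}M$. This is the only place the theorem is invoked, and it is the conceptual content of the argument.

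Next, since $Soc^{S}(M)$ is by definition the intersection of all S-essential submodules of $M$, the previous step gives
\[
Soc^{S}(M)\leq \bigcap_{I\ll^{S}R}(0:_{M}I).
\]
Finally I would use the elementary identity $(0:_{M}\sum_{\alpha}I_{\alpha})=\bigcap_{\alpha}(0:_{M}I_{\alpha})$ applied to the family of all S-small ideals, together with the definition $Rad^{S}(R)=\sum_{I\ll^{S}R}I$, to rewrite the right-hand side as $(0:_{M}Rad^{S}(R))$. Concatenating the two containments yields the desired $Soc^{S}(M)\leq (0:_{M}Rad^{S}(R))$.

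There is no real obstacle here: once one recognises that the previous theorem immediately produces a large supply of S-essential submodules of the form $(0:_{M}I)$, the rest is a formal manipulation of annihilators. The only point that might deserve a line of justification in the written proof is the annihilator-of-sum identity, which is routine.
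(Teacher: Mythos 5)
Your proposal is correct and follows essentially the same route as the paper: the paper's one-line proof is exactly the chain $Soc^{S}(M)=\bigcap_{N\leq^{S}_{e}M}N\leq \bigcap_{I\ll^{S}R}(0:_{M}I)=(0:_{M}\sum_{I\ll^{S}R}I)=(0:_{M}Rad^{S}(R))$, justified by the preceding theorem. Your only addition is to make explicit that it is the backward direction of that theorem (applied with $N=(0:_{M}I)$ and $s=1\in S$) which shows each $(0:_{M}I)$ with $I\ll^{S}R$ is S-essential, which is precisely the step the paper leaves implicit.
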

\begin{proof}
The proof is clear by Theorem 2.5, since
\[Soc^{S}(M)=\bigcap _{N\leq _{e}^{S}M}N\leq \bigcap _{I\ll ^{S}M}(0:_{M}I)=(0:\sum _{I\ll ^{S}M}I)=(0:_{M}Rad^{S}(R)).\]
\end{proof}
\section{S-quasi copure submodules}
In this section we define the concept of S-quasi copure submodule of an $R$-module $M$ and provide some results concerning this new class of submodules.
\begin{defn}
Let $S$ be a m.c.s. of $R$ and $P$ a submodule of $M$ with $(P :_{R} M)\cap S =\emptyset $. Then the submodule $P$ is called an $S$-prime submodule if there exists $s\in S$, and whenever $am\in P$, then $sa\in (P :_{R} M)$ or $sm\in P$ for each $a\in R$, $m\in M$. Particularly, an ideal $I$ of $R$ is called an $S$-prime ideal if $I$ is an $S$-prime submodule of $R$-module $R$. We denote the sets of all prime
submodules and all $S$-prime submodules of $M$ by $Spec(M)$ and $Spec_{S}(M)$, respectively.
\end{defn}
Note that for every $P\in Spec(M)$ such that  $(P:_{R}M)\cap S=\emptyset $, then $P\in Spec_{S}(M)$ since $1\in S$. Also, if we take $S\subseteq U(R)$, where $U(R)$ denotes the set of units in $R$, the notions of $S$-prime submodules and prime submodules are equal.
\begin{rem}
 For any submodule $N$ of an $R$-module $M$, we define $V^{S}(N)$ to be the set of all $S$-prime submodules
of $M$ containing $N$. Also the $S$-radical of a submodule $N$ of $M$ is the intersection of all $S$-prime submodules of $M$ containing $N$, denoted by $rad^{S}(N)$ therefore $rad^{S}(N)=\cap V^{S}(N)$. If $N$ is not contained in any $S$-prime submodule of $M$, then $rad^{S}(N) = M$. 
\end{rem}
\begin{defn}
Let $S$ be a m.c.s. of $R$ and let $M$ be an $R$-module.
\begin{itemize}
\item[(i)] A submodule $N$ of $M$ is said to be $S$-pure if there exists an $s\in S$ such that
$s(N \cap IM)\subseteq IN$ for every ideal $I$ of $R$. Also, $M$ is said to be fully $S$-pure if every submodule of $M$ is $S$-pure.
\item[(ii)] A submodule $L$ of $M$ is $S$-copure if there exists an $s\in S$ such
that $s(L :_{M}I)\subseteq L + (0 :_{M} I)$ for every ideal $I$ of $R$. We will denote the set of all
$S$-copure submodules of $M$ by $C^{S}(M)$. An
$R$-module $M$ is fully $S$-copure if every submodule of $M$ is $S$-copure, i.e., $L(M)=C^{S}(M)$. For a
submodule $N$ of an $R$-module $M$, we will denote the set of all
$S$-copure $S$-prime submodules of $M$ containing $N$ by $CV^{S}(N)$. Equivalently, 
$CV^{S}(N)=V^{S}(N)\cap C^{S}(M)$. If $N$ is not contained
in any $S$-prime $S$-copure submodule of $M$, then we put $CV^{S}(N) = M$.
\item[(iii)] A submodule $N$ of $M$ is called a weak $S$-copure submodule if every prime submodule $P$ of $M$ containing $N$ is an $S$-copure submodule of $M$, i.e., $V(N)\subseteq C^{S}(M)$. We will denote the set of all this submodules of $M$ by $C^{S}_{w}(M)$.
\item[(iv)] A submodule $N$ of $M$ is called an $S$-quasi-copure submodule if every $S$-prime submodule $P$ of $M$ containing $N$ is an $S$-copure submodule of $M$. Equivalently, if $V^{S}(N)\subseteq C^{S}(M)$ hence $V^{S}(N) = CV^{S}(N)$.
We will denote the set of all this submodules of $M$ by $C^{S}_{q}(M)$.
\end{itemize}
\end{defn}
\begin{thm}
Let $S\subseteq R$ be a m.c.s. and let $M$ be an S-co-m module. Then the following assertions hold.
\begin{itemize}
\item[(i)] If $N\in C^{S}(M)$ then $M/N$ is an S-co-m $R$-module.
\item[(ii)] If $N\in C^{S}(M)$ then for every $s\in S$, $M/sN$ is an S-co-m $R$-module.
\end{itemize}
\end{thm}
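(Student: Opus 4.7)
The plan is to transport the S-co-m property of $M$ down to the quotient using the elementary identity $(0:_{M/L} I) = (L:_M I)/L$, valid for any submodule $L$ of $M$ and any ideal $I$ of $R$. The S-copurity witness attached to $N$ is exactly what converts an $M$-side co-m sandwich into a sandwich for the quotient.

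For part (i), I would fix an arbitrary submodule of $M/N$, which must be of the form $K/N$ with $N\leq K\leq M$. Since $M$ is S-co-m, there exist $s_1\in S$ and an ideal $I$ of $R$ with $s_1(0:_M I)\leq K\leq (0:_M I)$. The upper bound yields $IK=0$, so $K/N\leq (N:_M I)/N=(0:_{M/N} I)$. For the lower bound, I would invoke S-copurity of $N$ to obtain a single $s_2\in S$ satisfying $s_2(N:_M J)\leq N+(0:_M J)$ for every ideal $J$ of $R$, apply this to $J=I$, and multiply by $s_1$ to get
$$s_1 s_2(N:_M I)\leq s_1 N+s_1(0:_M I)\leq N+K=K,$$
using $N\leq K$. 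Passing to $M/N$ yields $(s_1 s_2)(0:_{M/N} I)\leq K/N$, so the pair $(s_1 s_2, I)$ witnesses that $M/N$ is S-co-m.

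Part (ii) follows the same pattern, with the wrinkle that a submodule of $M/sN$ has the form $K/sN$ where only $sN\leq K$ is guaranteed, not $N\leq K$. Starting from the same S-co-m sandwich $s_1(0:_M I)\leq K\leq (0:_M I)$, the upper bound gives $IK=0\leq sN$, so $K/sN\leq (sN:_M I)/sN=(0:_{M/sN} I)$. For the lower bound, I would combine $(sN:_M I)\leq (N:_M I)$ with S-copurity of $N$ to obtain $s_2(sN:_M I)\leq N+(0:_M I)$, and then multiply by the auxiliary factor $s$ to push the unwanted $N$-term into $sN\leq K$:
$$s\cdot s_1 s_2(sN:_M I)\leq s_1(sN)+s_1 s(0:_M I)\leq K+K=K,$$
since $s_1(sN)\leq s_1 K\leq K$ and $s_1 s(0:_M I)\leq s_1(0:_M I)\leq K$. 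The pair $(ss_1 s_2, I)$ then witnesses that $M/sN$ is S-co-m.

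The only real subtlety is part (ii), where $N$ need not sit inside $K$; multiplication by the single auxiliary $s$ is precisely what is required to convert the copurity term $N$ into $sN\leq K$. Everything fits together because S-copurity supplies one element $s_2\in S$ valid for every ideal simultaneously, so only a bounded product of elements of $S$ ever accumulates, and the final witness indeed lies in $S$.
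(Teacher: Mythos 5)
Your proof is correct, and in fact it is more complete than the argument the paper itself gives, even though the basic strategy (descend the $S$-co-m sandwich for $K$ in $M$ to the quotient) is the same. For (i), the paper only exhibits the chain $s\,(0:_{M}I)/N \subseteq K/N \subseteq (0:_{M}I)/N$ and stops; it never uses the hypothesis $N\in C^{S}(M)$, silently treating $(0:_{M}I)/N$ as if it were the annihilator of $I$ in $M/N$, whereas $(0:_{M/N}I)=(N:_{M}I)/N$ and in general only the inclusion $(0:_{M}I)/N\subseteq(0:_{M/N}I)$ holds. Your invocation of the copurity witness $s_{2}$ --- giving $s_{1}s_{2}(N:_{M}I)\subseteq s_{1}N+s_{1}(0:_{M}I)\subseteq N+K=K$, hence $s_{1}s_{2}(0:_{M/N}I)\subseteq K/N\subseteq(0:_{M/N}I)$ --- supplies exactly the step the paper omits and explains why $S$-copurity of $N$ is needed at all. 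For (ii), the paper deduces the claim from part (i) together with the external result \cite[Proposition 2.7 (c)]{Far}, while you argue directly; your treatment of the fact that only $sN\leq K$ (not $N\leq K$) is available, by inserting the extra factor $s$ so that the copurity term $N$ is pushed into $sN\leq K$ and the final witness becomes $ss_{1}s_{2}\in S$, is sound. So what your route buys is an explicit use of the copure hypothesis and a self-contained proof of (ii); what the paper's route buys is brevity, at the cost of a gap in (i) and a dependence on \cite{Far} in (ii).
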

\begin{proof}
i) Let $K/N\leq M/N$. Since $M$ is an S-co-m $R$-module, there exist an
ideal $I$ of $R$ and an $s\in S$ such that $s(0 :_{M}I)\subseteq K\subseteq (0 :_{M}I)$. Then 
\[s\frac{(0:_{M}I)}{N}=\frac{s(0:_{M}I)+N}{N}\subseteq \frac{K+N}{N}=\frac{K}{N}\subseteq \frac{(0:_{M}I)}{N}.\]
Hence, $M/N$ is an S-co-m module. 
ii) This follows by part (i) and \cite[Proposition 2.7 (c)]{Far}.
\end{proof}
\begin{thm}
Let $M$ be an $R$-module. If $S\subseteq T$ are m.c.s. of $R$ and $N, K\in L(M)$ such that $N\subseteq K$. The following statements hold.
\begin{itemize}
\item[(i)] If $N\in C_{w}^{S}(M)$ then $K\in C_{w}^{S}(M)$.
\item[(ii)] If $N\in C_{w}^{S}(M)$ then $K/N\in C_{w}^{S}(M/N)$. 
\item[(iii)] Assume that $M$ is a distributive module. If $N, K\in C^{S}(M)$, then $N\cap K\in C^{S}(M)$. Moreover, if $V(N)$ is a finite set and $N\in C^{S}(M)$, then $rad(N)\in C^{S}(M)$.
\item[(iv)] Suppose that $M$ is a multiplication module, and $N, K\in L(M)$. If
$P\in V(NK)$ such that $(P:M)\cap S=\emptyset $, then there exists a $s\in S$ such that $sN\subseteq P$ or $sK\subseteq P$.
\item[(v)] $C^{S}_{q}(M)\subseteq C^{T}_{q}(M)$.
\item[(vi)] If $N\in C^{S}_{q}(M)$, then for every $\mathfrak{p}\in Spec(R)$, $N_{\mathfrak{p}}\in C^{S_{\mathfrak{p}}}_{q}(M_{\mathfrak{p}})$. 
\end{itemize}
\end{thm}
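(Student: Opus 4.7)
I would treat parts (i)--(vi) as six short arguments, each unpacking the relevant ``$S$-(weakly-)copure'' definition into an explicit witness $s\in S$. Parts (i) and (ii) are monotonicity plus the submodule correspondence: for (i), every prime submodule of $M$ containing $K$ automatically contains $N$, so $V(K)\subseteq V(N)\subseteq C^{S}(M)$; for (ii), the primes of $M/N$ containing $K/N$ are exactly the quotients $P/N$ of primes $P\supseteq K\supseteq N$, and using the identifications $(P/N:_{M/N}I)=(P:_{M}I)/N$ together with the inclusion $((0:_{M}I)+N)/N\subseteq (0:_{M/N}I)$, the same witness $s\in S$ that makes $s(P:_{M}I)\subseteq P+(0:_{M}I)$ also makes $P/N$ an $S$-copure submodule of $M/N$.

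For (iii), let $s_{1},s_{2}\in S$ be witnesses for the $S$-copureness of $N$ and $K$ respectively. Starting from the identity $((N\cap K):_{M}I)=(N:_{M}I)\cap (K:_{M}I)$, applying $s_{1}s_{2}$ lands this residual inside $(N+(0:_{M}I))\cap (K+(0:_{M}I))$, and distributivity of the submodule lattice $L(M)$ collapses this to $(N\cap K)+(0:_{M}I)$. For the $\rad(N)$ clause with $V(N)=\{P_{1},\ldots,P_{n}\}$ finite, I read the hypothesis as $N\in C_{w}^{S}(M)$ (otherwise the $P_{i}$ need not themselves be $S$-copure), and an $n$-fold iteration of the intersection closure just proved yields $\rad(N)=\bigcap_{i}P_{i}\in C^{S}(M)$.

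Part (iv) is the standard multiplication-module argument: since $M$ is a multiplication module, $N=(N:M)M$ and $K=(K:M)M$, whence $NK=(N:M)(K:M)M\subseteq P$. Because $P$ is a prime submodule, $(P:M)$ is a prime ideal of $R$, and $(N:M)(K:M)\subseteq (P:M)$ forces $(N:M)\subseteq (P:M)$ or $(K:M)\subseteq (P:M)$; multiplying through by $M$ recovers $N\subseteq P$ or $K\subseteq P$, so $s=1\in S$ suffices.

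For (v) and (vi) I would transport witnesses directly through $S\subseteq T$ and through $S\to S_{\mathfrak{p}}$. In (v), any $s\in S$ witnessing $S$-copureness lies in $T$ and hence witnesses $T$-copureness, giving $C^{S}(M)\subseteq C^{T}(M)$ for free; the remaining and, I expect, main obstacle is $V^{T}(N)\subseteq V^{S}(N)$, that is, extracting an $S$-witness for the $S$-prime implication from a given $T$-witness, which would have to exploit $(P:M)\cap T=\emptyset\Rightarrow (P:M)\cap S=\emptyset$ together with the freedom to absorb factors from $S$ into the witness. In (vi), both $(\cdot:_{M}I)$ and $(0:_{M}I)$ commute with localization at $\mathfrak{p}$, submodules of $M_{\mathfrak{p}}$ arise as localizations of submodules of $M$, and each witness $s\in S$ passes to $s/1\in S_{\mathfrak{p}}$; combining these, $S_{\mathfrak{p}}$-primes of $M_{\mathfrak{p}}$ containing $N_{\mathfrak{p}}$ pull back to $S$-primes of $M$ containing $N$, and $S$-copureness transfers termwise.
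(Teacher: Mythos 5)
Parts (i)--(iv) and (vi) of your proposal are essentially correct and run parallel to the paper. For (i) you make explicit the monotonicity $V(K)\subseteq V(N)$ that the paper dismisses as clear. For (ii) the paper quotes \cite[Corollary 2.8(ii)]{SAT} and \cite[Theorem 2.6(c)]{Far} for the passage of ($S$-)prime and $S$-copure submodules to quotients, while you verify the quotient step by hand via $(P/N:_{M/N}I)=(P:_{M}I)/N$ and $((0:_{M}I)+N)/N\subseteq(0:_{M/N}I)$; that computation is correct. For (iii) your argument is the paper's argument (the paper checks only cyclic ideals and invokes \cite[Theorem 2.12]{Far}, you check all ideals directly), and your observation that the $\rad(N)$ clause really needs the primes in $V(N)$ to be $S$-copure --- i.e.\ the hypothesis should be read as $N\in C^{S}_{w}(M)$ --- is exactly the point hidden behind the paper's ``clear by induction''. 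In (iv) you are genuinely more elementary: the paper routes through $S$-prime submodules via \cite{SAT} and produces a witness $ts\in S$, whereas your argument through the prime ideal $(P:M)$ shows that $s=1$ already works and never uses $(P:M)\cap S=\emptyset$. Your (vi) is at the same level of detail as the paper's: both hinge on the facts (outsourced by the paper to \cite[Proposition 2.13]{Far}) that every $S_{\mathfrak{p}}$-prime submodule of $M_{\mathfrak{p}}$ containing $N_{\mathfrak{p}}$ is $Q_{\mathfrak{p}}$ for some $S$-prime $Q\supseteq N$ and that $S$-copureness localizes; beware also that $(Q:_{M}I)$ commutes with localization only for finitely generated $I$.

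The genuine gap is (v), and you have located it precisely but not closed it: to get $N\in C^{T}_{q}(M)$ you must handle an arbitrary $T$-prime submodule $P\supseteq N$, and your hope of extracting an $S$-witness from a $T$-witness cannot succeed in general. Already for ideals: take $R=\mathbb{Z}$, $S=\{1\}$, $T=\{2^{n}: n\geq 0\}$; then $12\mathbb{Z}$ is $T$-prime (witness $4$) but is not prime, so $V^{T}(N)\subseteq V^{S}(N)$ fails. Note that the paper does not supply the missing step either: its proof of (v) starts from an $S$-prime $P\supseteq N$, observes that $P$ is then $T$-prime and that $S$-copure implies $T$-copure, and stops --- so it verifies the defining condition of $C^{T}_{q}(M)$ only for those $T$-primes that happen to be $S$-prime, leaving $T$-primes that are not $S$-prime untouched (its opening appeal to $M$ being an $S$-multiplication module is not even a hypothesis of the theorem). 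So your (v) is incomplete, but the missing idea is not one you could have lifted from the paper's argument; a complete proof would have to deal directly with $T$-prime submodules that are not $S$-prime, or the statement would need an additional hypothesis.
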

\begin{proof}
i) It is clear.\\
ii) Suppose that $P\in Spec_{S}({}_{R}M)$ and $N\leq K\leq P$ then by \cite[Corollary 2.8 (ii)]{SAT},
 $P/N\in Spec _{S}({}_R(M/N))$. Then by \cite[Theorem 2.6 (c)]{Far} since $P$ is an S-copure submodule of $M$ hence $P/N$ is an
 S-copure submodule of $M/N$ such that $K/N\leq P/N$.\\
iii) Since $N, K\in C^{S}(M)$ hence there exist $s_{1}, s_{2}\in S$ such that for every ideal $I$ of $R$, 
$s_{1}(N:_{M}I)\subseteq N+(0:_{M}I)$ and also $s_{2}(K:_{M}I)\subseteq K+(0:_{M}I)$. Take $s=s_{1}s_{2}\in S$, then for every $a\in R$,
\begin{align*}
s(N\cap K:_{M}a)&=s_{1}s_{2}((N:_{M}a)\cap (K:_{M}a))\\
& \subseteq s_{1}(N:_{M}a)\cap s_{2} (K:_{M}a)\\
&\subseteq (N+(0:_{M}a)\cap (K+(0:_{M}a))\\
&=(N\cap K)+(0:_{M}a).
\end{align*}
Therefore by \cite[Theorem 2.12]{Far}, we include that $N\cap K\in C^{S}(M)$.The second part is clear by induction on $|V(N)|< \infty $.\\
iv) Suppose that $P\in V(NK)$ and $(P :_{R}M)\cap S =p\cap S= \emptyset $ where $p=(P:M)\in Spec(R)$. By \cite[Proposition 2.2]{SAT}, $P\in V^{S}(NK)$. Assume that $N=IM$ and $K=JM$ for some ideals $I$ and $J$ of $R$. By virtue of \cite[Lemma 2.5]{SAT}, since $P$ is an $S$-prime submodule of $M$ and $P\supseteq NK=IJM$ hence there exists an $s\in S$ such that $sIJ\subseteq (P:_{R}M)$ or $sM\subseteq P$. If $sM\subseteq P$, then $s\in (P:_{R}M)$ which is impossible. This implies that $sIJ\subseteq (P:_{R}M)$ for some $s\in S$. By \cite[Proposition 2.9]{SAT}, since $M$ is a multiplication module therefore $P\in Spec_{S}(M)$ if and only if  $p=(P:_{R}M)\in Spec_{S}(R)$. Since $sIJ\subseteq p$, then by \cite[Corollary 2.6]{SAT}, there exists an $t\in S$ such that
$t(sI)=tsI\subseteq p$ or $tsJ\subseteq tJ\subseteq p$. Therefore either $ts(IM)=tsN\subseteq pM=P$ or $tsJM=tsK\subseteq pM=P$. Take $s'=ts$ then the proof is complete.\\
v) Since $M$ is an $S$-multiplication module, then by \cite[Proposition 1]{AA}, $M$ is also a
$T$-multiplication module. Assume that $P$ is an $S$-prime submodule of $M$ contaning $N$, then by \cite[Proposition 2.2 (ii)]{SAT} $P$ is an $T$-prime submodule of $M$ containing $N$ in the case $(P :_{R}M)\cap T=\emptyset $. If $N\in C^{S}_{q}(M)$, then $P$ is an $S$-copure submodule of $M$ and by \cite[Proposition 2.7 (a)]{Far}, $P$ is an $T$-copure submodule of $M$ containing $N$. This implies that $N\in C^{T}_{q}(M)$.\\
vi) Suppose that $Q_{\mathfrak{p}}\in V^{S_{\mathfrak{p}}}(N_{\mathfrak{p}})$ is an $S_{\mathfrak{p}}$-prime submodule of $M_{\mathfrak{p}}$ as an $R_{\mathfrak{p}}$-module containing $N_{\mathfrak{p}}$.  Since $N\in C^{S}_{q}(M)$ hence every $S$-prime submodule $Q$ of $M$ containing $N$ is $S$-copure, then by \cite[Proposition 2.13]{Far}, $Q_{\mathfrak{p}}$ is a $S_{\mathfrak{p}}$-copure submodule of $M_{\mathfrak{p}}$. 
\end{proof}
We recall that the saturation $S^{*}$ of $S$ is defined as $S^{*}=\{x\in R : \frac{x}{1}\in U(S^{-1}R)\}$. Obviously, $S^{*}$ is a m.c.s. of $R$ containing $S$, see \cite{Gil}.
\begin{thm}
Let $S$ be a m.c.s. of $R$. The following assertions hold.
\begin{itemize}
\item[(i)]  $C^{S}_{q}(M)\subseteq C^{S^{*}}_{q}(M)$.
\item[(ii)] Assume that $M$ is a finitely generated faithful multiplication module then $N=IM\in C^{S}_{q}(M)$ if and only if $I\in C^{S}_{q}(R)$ such that $N=IM$ for some ideal $I$ of $R$. Furthermore, for every $P\in Spec_{S}(M)$
such that $(P : M)\cap S =\emptyset $, then $rad^{S}(M)=rad^{S}(R)M$.
\end{itemize}
\end{thm}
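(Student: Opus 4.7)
For part (i), the plan is to apply Theorem~3.4(v) with $T$ taken to be the saturation $S^{*}$ of $S$. Since $S^{*}$ is a m.c.s.\ of $R$ containing $S$, that theorem immediately yields the desired inclusion $C^{S}_{q}(M) \subseteq C^{S^{*}}_{q}(M)$, so no further argument is needed here.

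For the equivalence in part (ii), I would exploit the well-known bijective correspondence $N \leftrightarrow (N :_{R} M)$, $I \leftrightarrow IM$ between submodules of the finitely generated faithful multiplication $R$-module $M$ and ideals of $R$. By \cite[Proposition 2.9]{SAT}, this correspondence restricts to one between $S$-prime submodules $P$ of $M$ with $(P :_{R} M)\cap S = \emptyset$ and $S$-prime ideals of $R$ disjoint from $S$; in particular, the $S$-prime submodules of $M$ containing $N = IM$ are exactly the submodules $pM$ with $p \in Spec_{S}(R)$ and $p \supseteq I$. The second ingredient is to check that $pM$ is $S$-copure in $M$ if and only if $p$ is $S$-copure in $R$. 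This reduces, via the identities $(0 :_{M} J) = (0 :_{R} J)M$ and $(pM :_{M} J) = (p :_{R} J)M$ (valid because $M$ is finitely generated faithful multiplication) together with the criterion in \cite[Theorem 2.12]{Far}, to a direct comparison of the defining inclusions on both sides. Combining these two correspondences gives $N \in C^{S}_{q}(M) \Leftrightarrow I \in C^{S}_{q}(R)$.

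For the ``Furthermore'' clause, which I read as $rad^{S}(M) = rad^{S}(R)\,M$ (the $S$-radical of the zero submodule of $M$ equals the $S$-radical of the zero ideal of $R$, multiplied out to $M$), applying the same correspondence with $N = 0$ shows that every $P \in Spec_{S}(M)$ with $(P :_{R} M)\cap S = \emptyset$ has the form $pM$ with $p \in Spec_{S}(R)$, and conversely. Therefore
\[ rad^{S}(M) \;=\; \bigcap_{P \in Spec_{S}(M)} P \;=\; \bigcap_{p \in Spec_{S}(R)} pM \;=\; \Bigl(\bigcap_{p \in Spec_{S}(R)} p\Bigr) M \;=\; rad^{S}(R)\,M, \]
where the third equality uses the standard fact that in a finitely generated faithful multiplication module arbitrary intersections of ideals commute with the assignment $I \mapsto IM$.

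The main obstacle will be the $S$-copure transfer step in part (ii): the bijection of submodules with ideals and the identification of $S$-prime submodules are already in the literature cited in this paper, but transferring the $S$-copure property requires carefully unwinding the definition through the colon formulas above and then matching the witnesses $s \in S$ on both sides. Once this reduction is in hand, both the equivalence in (ii) and the radical identity follow by essentially formal manipulation.
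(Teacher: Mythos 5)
Your proposal is correct and follows essentially the same route as the paper: part (i) from the earlier inclusion $C^{S}_{q}(M)\subseteq C^{T}_{q}(M)$ with $T=S^{*}$, and part (ii) via the ideal--submodule correspondence for finitely generated faithful multiplication modules, the colon identities $(0:_{M}\mathfrak{a})=(0:_{R}\mathfrak{a})M$ and $(\mathfrak{p}M:_{M}\mathfrak{a})=(\mathfrak{p}:_{R}\mathfrak{a})M$ together with cancellation to transfer $S$-copurity between $\mathfrak{p}$ and $\mathfrak{p}M$. The only cosmetic difference is in the radical identity, where you invoke the standard commutation of intersections with $I\mapsto IM$ while the paper cites \cite[Theorem 2.11]{SAT}; both yield $rad^{S}(M)=rad^{S}(R)M$.
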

\begin{proof}
i) It is clear.\\
ii) Assume that $\mathfrak{p}\in Spec_{S}(R)$ such that $\mathfrak{p}\supseteq I$. We must show that $\mathfrak{p}$ is an S-copure ideal of $R$. Since $M$ is a multiplication module by \cite[Proposition 2.9 (ii)]{SAT}, $P=\mathfrak{p}M\in Spec_{S}(M)$. By hypothesis since $N=IM\in C^{S}_{q}(M)$ and $P=\mathfrak{p}M\geq N=IM$ this conclude that $P$ is an S-copure submodule of $M$. Therefore there exists an $s\in S$ such that 
$s(P:_{M}\mathfrak{a})\leq P+(0:_{M}\mathfrak{a})$ for each ideal $\mathfrak{a}$ of $R$. We prove that 
$s(\mathfrak{p}:_{R}\mathfrak{a})\subseteq \mathfrak{p}+(0:_{R}\mathfrak{a})$ for each ideal $\mathfrak{a}$ of $R$. We conclude that 
\begin{align*}
s(P:_{M}\mathfrak{a})&=s(\mathfrak{p}M:_{M}\mathfrak{a})=s(\mathfrak{p}:_{R}\mathfrak{a})M\leq P+(0:_{M}\mathfrak{a})\\
&=\mathfrak{p}M+((0:_{M}\mathfrak{a}):_{R}M)M\\
&=(\mathfrak{p}+(0:_{R}\mathfrak{a}))M.
\end{align*}
Since $M$ is a cancellation module therefore $s(\mathfrak{p}:_{R}\mathfrak{a})\subseteq \mathfrak{p}+(0:_{R}\mathfrak{a})$. The converse is similar. By \cite[Theorem 2.11]{SAT}, we have
\[rad^{S}(M)=\bigcap _{Ann(M)\subseteq I\in Spec^{S}(R)}IM=rad^{S}(R)M.\]
\end{proof}
\section*{Acknowledgements}
The author is grateful to the referee for helpful suggestions which
have resulted in an improvement to the article. This article was
supported by the Grant of Payame Noor University of Iran.

\end{document}